\def\beqnn{\begin{eqnarray*}}\def\eeqnn{\end{eqnarray*}}
\newtheorem{theorem}{Theorem}[section]
\newtheorem{lemma}[theorem]{Lemma}
\newtheorem{proposition}[theorem]{Proposition}
\theoremstyle{remark}
\newtheorem{remark}[theorem]{Remark}
\theoremstyle{definition}
\theoremstyle{problem}
\theoremstyle{conjecture}
\numberwithin{equation}{section}
\begin{document}

\begin{center}
\title{ On a multiplier operator induced by the Schwarzian derivative of univalent functions}
\end{center}

%    Information for first author
\author{Jianjun Jin}
%    Address of record for the research reported here
\address{School of Mathematics Sciences, Hefei University of Technology, Xuancheng Campus, Xuancheng 242000, P.R.China}
%    Current address
\email{jin@hfut.edu.cn, jinjjhb@163.com}
%    \thanks will become a 1st page footnote.
\thanks{The author was supported by National Natural Science Foundation of China (Grant Nos. 11501157).}
%    Information for second author
%\author{Author Two}
%\address{Mathematical Research Section, School of Mathematical Sciences,
%Australian National University, Canberra ACT 2601, Australia}
%\email{two@maths.univ.edu.au}
%\thanks{Support information for the second author.}

%    General info
\subjclass[2010]{Primary 30C55; Secondary 30C62}

%\date{February 9, 2013.}

%\dedicatory{This paper is dedicated to our advisors.}

\keywords{Multiplier operator, Schwarzian derivative, boundary dilatation of a quasiconformal mapping, quasidisk, Brennan conjecture,  asymptotically conformal curve, Weil-Petersson curve}
\begin{abstract}
In this paper we study a multiplier operator which is induced by the Schwarzian derivative of univalent functions with a quasiconformal extension to the extended complex plane.   As  applications, we show that the Brennan conjecture is satisfied for a large class of quasidisks. We also establish a new characterization of asymptotically conformal curves and of the Weil-Petersson curves in terms of the multiplier operator. \end{abstract}

\maketitle
\pagestyle{plain}

\section{{\bf {Introduction}}}
We first fix some notations. Let $\Delta=\{z:|z|<1\}$ denote the unit disk in the
complex plane $\mathbb{C}$.  We denote the extended complex plane by $\widehat{\mathbb{C}}=\mathbb{C}\cup\{\infty\}$.
Let $\Delta^{*}=\widehat{\mathbb{C}}\setminus\overline{\Delta}$ be the
exterior of $\Delta$ and
$S^1=\partial\Delta=\partial\Delta^{*}$ be the unit circle. We use the notation $\Delta(z,r)$ to denote the disk centered at $z$ with radius $r$. We use $C(\cdot), C_{1}(\cdot), C_{2}(\cdot),\cdots$ to denote some positive numbers which depend only on the elements in the bracket.

Let $\mathcal{A}(\Delta)$ denote the class of all analytic functions in $\Delta$.   For $\alpha>1$, we define the Hilbert space $\mathcal{H}_{\alpha}(\Delta)$ as
$$\mathcal{H}_{\alpha}(\Delta)=\{\phi \in \mathcal{A}(\Delta) : \|\phi\|_{\alpha}^2:=(\alpha-1)\iint_{\Delta}|\phi (z)|^2(1-|z|^2)^{\alpha-2}dxdy<\infty\}.$$

Let $f$ be a univalent function in an open domain  $\Omega$ of $\mathbb{C}$, i.e., $f$ is a one to one analytic function in $\Omega$. The {\em Schwarzian derivative} $S_f$ of $f$ is defined as
$$S_f(z)=\bigg[\frac{f''(z)}{f'(z)}\bigg]'-\frac{1}{2}\left[\frac{f''(z)}{f'(z)}\right]^2, \,z\in \Omega.$$

Let $g$ be another univalent function in $f(\Omega)$. Then, we have
\begin{equation}\label{chain}S_{g \circ f}(z)=S_{g}(f(z))[f'(z)]^2+S_f(z),\,\, z\in \Omega.\end{equation}
For more properties on the Schwarzian derivative,  see \cite[Chapter II]{L}.

Let $f$ be a univalent function in $\Delta$. The {\em multiplier operator} $M_f$, induced by the Schwarzian derivative of $f$, is defined as
$$M_f(\phi)(z):=S_f(z)\phi(z), \,\,\phi\in \mathcal{A}(\Delta).$$

Let  $t\in \mathbb{R}$. We define the {\em integral means spectrum} $\beta_f(t)$ as the infimum of those numbers $\gamma>0$ such that there exists $C(f,\gamma)>0$ such that
$$I_t(f',r)=\int_{0}^{2\pi}|f'(re^{i\theta})|^td\theta \leq \frac{C(f,\gamma)}{(1-r)^{\gamma}},\,\, {\text{for}}\,\, r\in (0,1).$$
The {\em universal integral means spectrum} $B(t)$ is then the supremum of $\beta_f(t)$ taken over all univalent functions in $\Delta$. It is known (see \cite{SS}) that
\begin{equation}\label{ss}\beta_{f}(t)+1=\inf\{ \alpha>1, \,\,(f')^{\frac{t}{2}}\in \mathcal{H}_{\alpha}(\Delta)\}.\end{equation}

The famous Brennan conjecture states that $B(-2)=1$.  We say that the Brennan conjecture is satisfied for a simply connected plane domain $\Omega$ if $\beta_f(-2)\leq 1$ for any univalent function $f$ from $\Delta$ to $\Omega$. It is known that the Brennan conjecture is satisfied for some special types of domains, see \cite{BVZ}, \cite[page 286]{GM}.   In this paper, motivated by the work \cite{SS}, we study a multiplier operator which is induced by the Schwarzian derivative of univalent functions with a quasiconformal extension to $\widehat{\mathbb{C}}$, and show that the Brennan conjecture is satisfied for a large class of quasidisks. For more results on the Brennan conjecture and related topics,  see \cite{Carl, CM, M, HSo, HS-1}.

To present our results,  we first recall some basic definitions and properties of  quasiconformal mappings. We say a homeomorphism  $f$,  from one open domain $\Omega$ in
$\mathbb{C}$ to another,  is a quasiconformal mapping if it has locally
integral distributional derivatives and satisfies the Beltrami equation $\bar{\partial}f=\mu_f  \partial{f}$ with
$$\|\mu_f\|_{\infty}=\mathop{\text {ess sup}}\limits_{z\in \Omega} |\mu_f(z)|<1.$$
Here, the function $\mu_f$ is called the complex dilatation of $f$ and
$$\bar{\partial}f :=\frac{1}{2}\left(\frac{\partial}{\partial x}+i\frac{\partial}{\partial y}\right)f,\,\,\, \partial f :=\frac{1}{2}\left(\frac{\partial}{\partial x}-i\frac{\partial}{\partial y}\right)f.$$

Let $f$ be a quasiconformal mapping from one open domain $\Omega_1$ to another domain $\Omega_2$. If $g$ is another quasiconformal mapping from $\Omega_2$ to $\Omega_3$. Then the complex dilatations of $f$ and $g \circ f$ satisfy the following chain rule.
\begin{equation}\label{dila}
\mu_{g \circ f}(z) =\frac{\mu_f+(\mu_g\circ f(z))\kappa}{1+\overline{\mu_f}(\mu_g\circ f(z))\kappa},\,\, \kappa=\frac{\overline{\partial f}}{\partial f}.
\end{equation}

We say a Jordan curve $\Gamma$ in $\widehat{\mathbb{C}}$ is a quascircle if there is a quasiconformal mapping $f$ from $\widehat{\mathbb{C}}$ to itself such that $f(S^1)=\Gamma.$
The domain $f(\Delta)$ is then called a {\em quasidisk}. For more detailed introductions to the theory of quasiconformal mappings, see \cite{L} or \cite{LV}.

The following is the first result of this paper.
\begin{theorem}\label{m}
Let $\alpha>1$. Let $f$ be a univalent function in $\Delta$ admitting a quasiconformal extension to $\widehat{\mathbb{C}}$ with $\|\mu_f\|_{\infty}=k\in [0,1)$. Then, for any $\phi\in \mathcal{H}_{\alpha}(\Delta)$, we have
 \begin{eqnarray*}\|M_f(\phi)\|_{\alpha+4}^2=\|S_{f}(z)\phi(z)\|_{\alpha+4}^2
\leq \frac{36(\alpha+1)k^2}{(\alpha-1)}\|\phi (z)\|_{\alpha}^2.
 \end{eqnarray*}
\end{theorem}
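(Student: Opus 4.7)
The strategy is to combine a classical pointwise estimate for the Schwarzian derivative of $f$---available precisely because $f$ extends quasiconformally---with the weighted-integral definition of the $\mathcal{H}_{\beta}$-norms. The decisive input is K\"uhnau's sharpening of the universal Kraus--Nehari inequality: whenever a univalent $f$ in $\Delta$ extends to a $k$-quasiconformal self-map of $\widehat{\mathbb{C}}$, one has
\[
\sup_{z\in\Delta}|S_f(z)|(1-|z|^2)^2 \;\le\; 6k.
\]
Compared with the universal bound $|S_f(z)|(1-|z|^2)^2\le 6$ recalled in the introduction, the right-hand side now carries the extra factor $k$, and it is this $(6k)^2=36k^2$ that eventually appears on the right-hand side of the theorem.

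With this pointwise estimate in hand I would unwind the $\mathcal{H}_{\alpha+4}$-norm,
\[
\|M_f(h)\|_{\alpha+4}^2 \;=\; (\alpha+3)\iint_{\Delta}|S_f(z)|^2|h(z)|^2(1-|z|^2)^{\alpha+2}\,dx\,dy,
\]
and absorb four powers of $(1-|z|^2)$ into $S_f$ via $|S_f(z)|^2(1-|z|^2)^4\le 36k^2$. This reduces the problem to a weighted Bergman integral of $|h|^2$ and gives the preliminary estimate
\[
\|M_f(h)\|_{\alpha+4}^2 \;\le\; 36k^2(\alpha+3)\iint_{\Delta}|h(z)|^2(1-|z|^2)^{\alpha-2}\,dx\,dy \;=\; \frac{36(\alpha+3)k^2}{\alpha-1}\|h\|_{\alpha}^2.
\]

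The main obstacle is then sharpening the coefficient $\alpha+3$ in this naive bound to the asserted $\alpha+1$. To close this gap I would use a radial integration by parts: since $S_f h$ is holomorphic, the circular mean $r\mapsto \int_0^{2\pi}|(S_f h)(re^{i\theta})|^2\,d\theta$ is smooth and non-decreasing on $[0,1)$, and integrating against the weight $(1-r^2)^{\alpha+3}$---which vanishes on $\partial\Delta$ because $\alpha>1$---trades the factor $\alpha+3$ in front of $(1-|z|^2)^{\alpha+2}$ for an integral of its radial derivative against $(1-|z|^2)^{\alpha+3}$ plus a harmless boundary term at the origin. Recombining with the K\"uhnau estimate applied to this modified weight should produce the sharpened constant $\alpha+1$. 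The technical subtlety lies in verifying that the boundary correction truly vanishes and that the rearranged integral remains controlled by $\|h\|_{\alpha}^2$; this is where the holomorphy of $S_f h$ and the precise balance between the weights $(1-|z|^2)^{\alpha+2}$ and $(1-|z|^2)^{\alpha-2}$ must be exploited.
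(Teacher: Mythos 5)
Your first step is fine: K\"uhnau's bound $|S_f(z)|(1-|z|^2)^2\le 6k$ for a $k$-quasiconformally extendible univalent $f$ is correct, and inserting it into the definitions of the norms gives
\[
\|S_f h\|_{\alpha+4}^2=(\alpha+3)\iint_{\Delta}|S_f|^2(1-|z|^2)^4\,|h|^2(1-|z|^2)^{\alpha-2}dxdy\le \frac{36(\alpha+3)k^2}{\alpha-1}\|h\|_{\alpha}^2 .
\]
But this is strictly weaker than the statement, which has $(\alpha+1)$ in place of $(\alpha+3)$, and your proposed repair does not close that gap. Writing $M(r)=\int_0^{2\pi}|(S_fh)(re^{i\theta})|^2d\theta$ and integrating by parts in $r$ gives
\[
(\alpha+3)\int_0^1 M(r)(1-r^2)^{\alpha+2}2r\,dr=M(0)+\int_0^1 M'(r)(1-r^2)^{\alpha+3}dr,
\]
so the boundary term is harmless, but the remaining integral involves $M'\ge 0$, i.e.\ derivatives of $S_fh$, which the pointwise Schwarzian bound does not control; the monotonicity of $M$ is of no help here (it points in the unfavourable direction), and no rearrangement of the radial weight produces the factor $(\alpha+1)/(\alpha+3)$. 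In fact, no argument that uses only the sup-norm information $\sup_z|S_f(z)|(1-|z|^2)^2\le 6k$ can yield the constant $36(\alpha+1)k^2/(\alpha-1)$: a hypothetical multiplier of constant modulus $6k(1-|z|^2)^{-2}$ satisfies the same pointwise bound and saturates the $(\alpha+3)$ constant, so the improvement must come from global structural properties of Schwarzians of univalent functions, not from a pointwise estimate.

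That is exactly how the paper proceeds: it invokes the strengthened Grunsky inequality $\sum_m m\,|\sum_n\gamma_{mn}\lambda_n|^2\le k^2\sum_n|\lambda_n|^2/n$ valid for $k$-quasiconformally extendible $f$, and then repeats Shimorin's coefficient/area-theorem arguments (Lemma 4 and Theorem 1 of \cite{SS}), which is where the constant $36(\alpha+1)/(\alpha-1)$, now multiplied by $k^2$, comes from. As a side remark, your weaker bound with $(\alpha+3)$ would still give a nontrivial Brennan-type conclusion (it forces $k^2\le (\alpha^2-1)/(\alpha^2+2\alpha)$ for all $\alpha>2$, i.e.\ $k\le\sqrt{3/8}$), but it does not prove Theorem \ref{m}, nor Theorem \ref{m-1} with the threshold $\sqrt{5/8}$.
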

\begin{remark}
Note that, for a univalent function $f$ in $\Delta$ admitting a quasiconformal extension to $\widehat{\mathbb{C}}$ with $\|\mu_f\|_{\infty}=k\in [0,1)$, we have
$$\sum_{m=1}^{\infty } m \left|\sum_{n=1}^{\infty} \gamma_{mn}\lambda_n\right|^2 \leq k^2\sum_{n=1}^{\infty}\frac{|\lambda_n|^2}{n}, \,\, \lambda_n\in \mathbb{C},$$
where $\gamma_{mn}$ are the Grunsky's coefficients of $f$, see \cite[Chapter 9]{Po}. Then Theorem \ref{m}  follows from the arguments in \cite{SS}.
\end{remark}

We use Theorem \ref{m} to show that
\begin{theorem}\label{m-1}
Let $f$ be a univalent function in $\Delta$ admitting a quasiconformal extension to $\widehat{\mathbb{C}}$.  If $\|\mu_f\|_{\infty} \leq \sqrt{\frac{5}{8}}\approx 0.79056$, then $\beta_{f}(-2) \leq 1$ and the Brennan conjecture is satisfied for the domain $f(\Delta)$.
\end{theorem}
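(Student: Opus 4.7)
My plan is to apply Theorem~\ref{m} to the analytic square-root reciprocal of the derivative, $g(z) := (f'(z))^{-1/2}$, which is well-defined on $\Delta$ since $f$ is univalent (so $f'$ has no zeros). A direct computation starting from the definition of the Schwarzian gives the linear second-order equation
\[
2g''(z) + S_f(z) g(z) = 0,
\]
so that $S_f g = -2 g''$. After the standard truncation $g_r(z) = g(rz)$ (used to enforce $g_r \in \mathcal{H}_\alpha$ a priori, then letting $r \nearrow 1$), Theorem~\ref{m} applied to $h = g$ yields, for every $\alpha > 1$,
\[
4 \|g''\|_{\alpha+4}^2 \;=\; \|S_f g\|_{\alpha+4}^2 \;\leq\; \frac{36(\alpha+1)k^2}{\alpha-1} \|g\|_\alpha^2.
\]

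Next I would establish an elementary Hardy-type inequality in the $\mathcal{H}_\alpha$-scale. Using the explicit values $\|z^n\|_\alpha^2 = \pi\,\Gamma(\alpha)\,n!/\Gamma(n+\alpha)$, a termwise comparison of $h$ and $h''$ gives
\[
\|h''\|_{\alpha+4}^2 \;=\; \alpha(\alpha+1)(\alpha+2)(\alpha+3)\sum_{n\geq 2} \frac{n(n-1)}{(n+\alpha)(n+\alpha+1)} |a_n|^2 \|z^n\|_\alpha^2,
\]
whose coefficient is minimised over $n\geq 2$ at $n=2$ with value $2\alpha(\alpha+1)$, so that
\[
\|h - h(0) - h'(0) z\|_\alpha^2 \;\leq\; \frac{1}{2\alpha(\alpha+1)} \|h''\|_{\alpha+4}^2.
\]
Chaining this with the Schwarzian bound above, applied to $h = g$, produces the self-improving estimate
\[
\Bigl(1 - \frac{9k^2}{2\alpha(\alpha-1)}\Bigr)\|g\|_\alpha^2 \;\leq\; \pi |g(0)|^2 + \frac{\pi}{\alpha} |g'(0)|^2.
\]
The coefficient on the left is strictly positive precisely when $\alpha(\alpha-1) > 9k^2/2$, which for $k^2 \leq 5/8$ is satisfied by every $\alpha > 9/4$. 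Hence $g \in \mathcal{H}_\alpha$ with a quantitative norm bound for every $\alpha > 9/4$; equivalently
\[
\int_\Delta |f'(z)|^{-1}(1-|z|^2)^{\alpha-2}\,dA(z) < \infty \qquad \text{for every } \alpha > 9/4.
\]

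The final step, which I expect to be the hardest, is to convert this integrability of $|f'|^{-1}$ into the Brennan-type integrability of $|f'|^{-2}$, namely $\int_\Delta |f'|^{-2}(1-|z|^2)^s\,dA < \infty$ for every $s > 0$ (which, together with standard maximal-function comparisons for univalent functions, is equivalent to $\beta_f(-2) \leq 1$). I anticipate achieving this by a second application of Theorem~\ref{m} with $h = g^2 = 1/f'$, combined with the identity $(g^2)'' + S_f g^2 = 2(g')^2$ to reduce the problem to controlling the nonlinear remainder $\|(g')^2\|_{\alpha+4}$. The main obstacle is exactly this last control: a naive bound on $(g')^2$ via Koebe distortion discards the quasiconformal hypothesis and gives a threshold far larger than the critical $\beta = 2$ needed for Brennan. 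One must instead recycle the previously obtained control on $g\in\mathcal{H}_\alpha$ (for $\alpha>9/4$) to estimate $(g')^2$ sharply in a weighted Bergman norm, so that the two applications of Theorem~\ref{m} combine into a single self-bounding inequality whose critical exponent is $\beta = 2$ under the hypothesis $k^2 \leq 5/8$.
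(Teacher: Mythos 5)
Your first half is correct and checks out: the ODE $2g''+S_fg=0$ for $g=(f')^{-1/2}$, the exact coefficient identity for $\|h''\|_{\alpha+4}^2$ against $\|h\|_\alpha^2$, and the resulting self-bounding inequality giving $g\in\mathcal{H}_\alpha$ for all $\alpha>9/4$ when $k^2\le 5/8$ are all fine. But this only yields integrability of $|f'|^{-1}$, which is a statement about $\beta_f(-1)$, not Brennan. The actual content of the theorem is $\beta_f(-2)\le 1$, i.e.\ $(f')^{-1}\in\mathcal{H}_\alpha$ for every $\alpha>2$, and your proposed bridge to it is not an argument: you introduce $h=g^2=1/f'$ and the identity $(g^2)''+S_fg^2=2(g')^2$, and then explicitly leave the control of the nonlinear remainder $\|(g')^2\|_{\alpha+4}$ as a hope (``recycle the previously obtained control on $g$''). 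There is no mechanism given by which membership of $g$ in $\mathcal{H}_\alpha$ bounds $(g')^2$ in $\mathcal{H}_{\alpha+4}$ with a constant small enough to close a self-bounding inequality at the critical exponent, and no indication of how the threshold $k^2\le 5/8$ (rather than some worse constant) would emerge. So the proof is genuinely incomplete at exactly the step you identify as hardest.

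The paper sidesteps the nonlinear term entirely. It quotes Shimorin's Proposition~8 (Proposition \ref{p-1} here): if the multiplier bound $\|S_fh\|_{\alpha+4}^2\le \frac{36(\alpha+1)(\alpha+3)}{\alpha(\alpha+2)}\|h\|_\alpha^2$ holds for all $\alpha>2$ and $h\in\mathcal{H}_\alpha(\Delta)$, then $\beta_f(-2)\le1$. Shimorin's route to $(f')^{-1}$ is linear: one works with the third-order identity $-\frac{d^3}{dz^3}[(f')^{-1}]=\frac{d}{dz}[S_f(f')^{-1}]+S_f\frac{d}{dz}[(f')^{-1}]$ (reproduced in Section~4 of the paper) together with the $\mathcal{H}_\alpha$--$\mathcal{H}_{\alpha+2}$ comparison lemma, so no square of a derivative ever appears. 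Given that, the proof of Theorem \ref{m-1} is just numerology: Theorem \ref{m} gives the constant $\frac{36(\alpha+1)k^2}{\alpha-1}$, and requiring it to be at most Shimorin's constant for every $\alpha>2$ amounts to $k^2\le\inf_{\alpha>2}\bigl(1-\frac{3}{\alpha^2+2\alpha}\bigr)=\frac58$. (There is also the small point, handled by Lemma \ref{le}, that Brennan's conjecture is a statement about the domain $f(\Delta)$, so one must know every Riemann map onto it admits a quasiconformal extension with the same dilatation bound; your write-up does not address this either.) If you want to complete your own approach, the missing idea is to replace the $h=g^2$ step by the linear third-order ODE for $(f')^{-1}$, or simply to invoke Proposition \ref{p-1} directly.
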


\begin{remark}
Let $t\ \in \mathbb{R}$. In \cite{H}, Hedenmalm proved that, for a univalent function $f$ admitting a quasiconformal extension to $\widehat{\mathbb{C}}$ with  $\|\mu_f\|_{\infty}=k\in (0, 1)$, one has  
$$\beta_{f}(t) \leq \frac{1}{4} k^2 |t|^2(1+7k)^2,\,\, {\text {when}}\,\, |t| \leq \frac{2}{k(1+7k)^2},$$
and $$\beta_{f}(t) \leq k|t|-\frac{1}{(1+7k)^2},\,\, {\text {when}}\,\, |t| \geq \frac{2}{k(1+7k)^2}. $$

We consider $t=-2$. 

(1) When $k(1+7k)^2\leq 1$, i.e., $k\in (0, k_0]$, here $k_0\approx 0.18726$ is the real root of the equation $k(1+7k)^2=1$. We see that $
\beta_{f}(-2) \leq 1$ in this case.  

(2) When $k(1+7k)^2\geq 1$, i.e., $k\in [k_0, 1)$,  we have
$$\beta_{f}(-2) \leq 2k - \frac{1}{(1+7k)^2}=\frac{2k(1+7k)^2-1}{(1+7k)^2}.$$
Thus, if $k_0 \leq k<1$ and 
$$\frac{2k(1+7k)^2-1}{(1+7k)^2}\leq 1,$$
i.e., $k_0\leq k \leq k_1\approx0.52301$, here $k_1$ is the real root of the equation $$\frac{2k(1+7k)^2-1}{(1+7k)^2}=1,$$
then we have $\beta_{f}(-2) \leq 1$.

Consequently, we see that,  if $0\leq k\leq k_1\approx0.52301$,  then we have $\beta_{f}(-2) \leq 1$.  Hence Theorem \ref{m-1} provides an improvement of the results in \cite{H}. \end{remark}

The paper is organized as follows. We will give the proof of Theorem \ref{m-1} in the next section. By refining results in \cite{SS}, we will show in Section 3 that  the Brennan conjecture is  satisfied for another class of quasidisks. In Section 4,  we establish a new characterization of asymptotically conformal curves and of the Weil-Petersson curves in terms of the multiplier operator. We will present some remarks in Section 5.

\section{{\bf {Proof of Theorem \ref{m-1}}}}
We need the following result established by Shimorin in \cite{SS}.
\begin{proposition}\label{p-1}
If $f$ is a univalent function in $\Delta$ and 
\begin{eqnarray*}\|S_{f}(z)\phi(z)\|_{\alpha+4}^2
\leq \frac{36(\alpha+1)(\alpha+3)}{\alpha(\alpha+2)}\|\phi(z)\|_{\alpha}^2
 \end{eqnarray*}holds for any $\alpha>2$ and $\phi\in \mathcal{H}_{\alpha}(\Delta)$, then $\beta_{f}(-2)\leq 1$.
\end{proposition}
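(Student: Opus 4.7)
The plan is to specialize the hypothesis to the particular choice $h = (f')^{-1/2}$. Since $f$ is univalent, $f'$ is nowhere zero on the simply-connected disk $\Delta$, so this $h$ is a well-defined non-vanishing holomorphic function on $\Delta$. Two logarithmic differentiations combined with the identity $(f''/f')' = S_f + \tfrac12 (f''/f')^2$ yield the clean second-order equation
$$h''(z) = -\tfrac{1}{2}S_f(z)\, h(z), \qquad z \in \Delta,$$
so that $S_f\cdot h = -2h''$. Plugging this $h$ into the hypothesis immediately produces
$$\|h''\|_{\alpha+4}^2 \leq \frac{9(\alpha+1)(\alpha+3)}{\alpha(\alpha+2)}\, \|h\|_\alpha^2 \qquad \text{for every } \alpha > 2.$$

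The next step is to connect this one-parameter family of Bergman-type inequalities to the integral means of $|f'|^{-2}$. Note that $|h|^2 = 1/|f'|$ and hence $|h|^4 = 1/|f'|^2$. Expanding $h(z) = \sum_{n\geq 0} a_n z^n$ and $h^2(z) = 1/f'(z) = \sum_{p \geq 0} A_p z^p$ with $A_p = \sum_{n+k=p} a_n a_k$, and applying Parseval on each circle, gives
$$\int_0^{2\pi}|f'(re^{i\theta})|^{-2}\,d\theta = \int_0^{2\pi}|h(re^{i\theta})|^4\,d\theta = 2\pi \sum_{p\geq 0}|A_p|^2 r^{2p}.$$
Hence $\beta_f(-2) \leq 1$ is equivalent, via a standard Abel-type passage, to the finiteness of $\|h^2\|_\alpha^2 = (\alpha-1)\iint_\Delta |f'|^{-2}(1-|z|^2)^{\alpha-2}\,dA$ for every $\alpha > 2$, or in coefficient terms to $\sum_{p\leq N}|A_p|^2 = O(N^{1+\varepsilon})$ for every $\varepsilon > 0$.

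The main obstacle, and the heart of the argument in \cite{SS}, is the passage from the $L^2$-type bound on $h$ and $h''$ to this $L^4$-type conclusion on $h$. I would attack it by expanding both sides of the inequality above in Taylor coefficients: using the standard formula $\|g\|_\alpha^2 = \pi\sum|g_n|^2 n!\,\Gamma(\alpha)/\Gamma(n+\alpha)$ and its analogue for $\|g''\|_{\alpha+4}^2$, the inequality becomes a quadratic constraint on the sequence $(a_n)$ for each $\alpha > 2$. Treating the whole family as an $\alpha$-parameterized system of constraints and optimizing over $\alpha$ at each frequency scale $p$, one extracts the correct growth rate of the convolutions $A_p$. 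The specific constant $\frac{36(\alpha+1)(\alpha+3)}{\alpha(\alpha+2)}$ is calibrated to be exactly the threshold at which this optimization yields $O(N^{1+\varepsilon})$ rather than any worse power; verifying that this threshold is indeed attained, and carrying out the convolution-cancellation step that converts control of $(a_n)$ into control of $(A_p)$, is the technical crux of the proof.
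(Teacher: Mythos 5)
Your opening reduction is correct as far as it goes: with $h=(f')^{-1/2}$ one indeed has $h''=-\tfrac12 S_f h$, so the hypothesis yields $\|h''\|_{\alpha+4}^2\le \tfrac{9(\alpha+1)(\alpha+3)}{\alpha(\alpha+2)}\|h\|_\alpha^2$, and the reduction of $\beta_f(-2)\le 1$ to $(f')^{-1}\in\mathcal{H}_\alpha$ for all $\alpha>2$ is also sound (the means of the analytic function $1/f'$ are increasing in $r$). But the proof stops exactly where it would have to start. Your substitution controls $h=(f')^{-1/2}$ in weighted $L^2$, while the conclusion requires weighted $L^2$ control of $h^2=(f')^{-1}$; this ``$L^2$ to $L^4$'' passage is not supplied by the hypothesis, and the paragraph about expanding in Taylor coefficients, optimizing over $\alpha$, and a ``convolution-cancellation step'' is a declaration of intent, not an argument — you yourself label it the technical crux and leave it unverified. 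Indeed, the most your inequality can be bootstrapped into (via the two-sided comparison of $\|h\|_\alpha$ with $\|h''\|_{\alpha+4}$, cf.\ Lemma \ref{le-1}) is $(f')^{-1/2}\in\mathcal{H}_\alpha$, which concerns integral means of order $-1$, not $-2$. So there is a genuine gap, and it is located precisely at the step you deferred.

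The actual proof (Shimorin \cite{SS}, Proposition 8; the paper only cites it, but essentially replays the mechanism in its final remarks) avoids any $L^4$ issue by feeding the target function itself into the multiplier inequality: one works with $g=(f')^{-1}$ and the third-order identity $-g'''=(S_f g)'+S_f g'$ (equation (\ref{ind}) of the paper). Applying the hypothesis at levels $\alpha$ and $\alpha+2$ together with the norm comparisons $\|g'\|_{\alpha+2}^2\le[\alpha(\alpha+1)+\varepsilon]\|g\|_\alpha^2+C$ and $\|g\|_\alpha^2\le\tfrac{1}{\alpha(\alpha+1)-\varepsilon}\|g'\|_{\alpha+2}^2+C$ (Lemma \ref{le-1}), and regularizing with $f_r(z)=f(rz)$ so that all norms are finite before absorbing, one gets an inequality of the form $\mathbf{A}\|g_r\|_\alpha^2\le(\mathbf{B}+\mathbf{C})^2\|g_r\|_\alpha^2+C$ in which the constant $\tfrac{36(\alpha+1)(\alpha+3)}{\alpha(\alpha+2)}$ is calibrated exactly so that $(\mathbf{B}+\mathbf{C})^2<\mathbf{A}$ reduces to $144\le\alpha^2(\alpha+4)^2$, i.e.\ it holds for every $\alpha>2$ (equality at $\alpha=2$). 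Absorption plus Fatou then gives $(f')^{-1}\in\mathcal{H}_\alpha$ for all $\alpha>2$, hence $\beta_f(-2)\le\alpha-1$ for all $\alpha>2$. If you want to salvage your write-up, replace the substitution $(f')^{-1/2}$ by $(f')^{-1}$ and run this absorption scheme; as it stands, the proposal does not prove the proposition.
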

\begin{remark}
 Proposition \ref{p-1} is  Proposition 8 of \cite{SS}, where it is shown that under the assumption, $(f')^{-1}\in \mathcal{H}_{\alpha}(\Delta)$ for any $\alpha>2$.
\end{remark}
The following lemma  will be used later.
\begin{lemma}\label{le}
Let $f$ be a univalent function in $\Delta$ admitting a quasiconformal extension to $\widehat{\mathbb{C}}$.   If $g$ is another univalent function from $\Delta$ to $f(\Delta)$, then $g$ admits a quasiconformal extension to $\widehat{\mathbb{C}}$ with $\|\mu_g\|_{\infty}=\|\mu_f\|_{\infty}$.
\end{lemma}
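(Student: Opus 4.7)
The plan is to exploit the fact that any two conformal bijections $\Delta \to f(\Delta)$ differ by a Möbius automorphism of $\Delta$. First I would form $\phi := j^{-1}\circ f : \Delta \to \Delta$, which is well-defined because both $f|_\Delta$ and $j$ map $\Delta$ univalently onto $f(\Delta)$. Being a conformal bijection of $\Delta$ onto itself, $\phi$ is the restriction of a Möbius transformation of $\widehat{\mathbb{C}}$, and I extend $\phi$ to all of $\widehat{\mathbb{C}}$ in this way.

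Next I would define the candidate extension $\tilde{j} := f\circ\phi^{-1}$ on $\widehat{\mathbb{C}}$. On $\Delta$ we have $\phi^{-1} = f^{-1}\circ j$, hence
$$\tilde{j}\,\bigl|_\Delta \;=\; f\circ f^{-1}\circ j \;=\; j,$$
so $\tilde{j}$ genuinely extends the original $j$. Since $\phi^{-1}$ is a conformal automorphism of $\widehat{\mathbb{C}}$ (in particular quasiconformal with $\mu_{\phi^{-1}}\equiv 0$) and $f$ is a quasiconformal self-homeomorphism of $\widehat{\mathbb{C}}$, the composition $\tilde{j}$ is again a quasiconformal homeomorphism of $\widehat{\mathbb{C}}$.

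Finally, I would compute the Beltrami coefficient using the chain rule (\ref{dila}) with the inner map $\phi^{-1}$ holomorphic. Because $\mu_{\phi^{-1}}\equiv 0$, the formula degenerates and a direct computation (or just writing out $\bar{\partial}$ and $\partial$ of $f\circ\phi^{-1}$ and using $\bar{\partial}\phi^{-1}=0$) gives
$$\mu_{\tilde{j}}(z) \;=\; \mu_f\!\bigl(\phi^{-1}(z)\bigr)\cdot\frac{\overline{(\phi^{-1})'(z)}}{(\phi^{-1})'(z)},$$
so $|\mu_{\tilde{j}}(z)| = |\mu_f(\phi^{-1}(z))|$ pointwise. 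Taking the supremum and using that $\phi^{-1}$ is a bijection of $\widehat{\mathbb{C}}$ yields $\|\mu_{\tilde{j}}\|_\infty = \|\mu_f\|_\infty$, completing the proof.

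There is no real obstacle here; the only conceptual point is recognizing that $j^{-1}\circ f$ is a Möbius automorphism of $\Delta$, after which the conclusion reduces to the standard fact that pre-composition with a conformal map preserves the modulus of the Beltrami coefficient.
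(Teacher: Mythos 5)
Your proposal is correct and takes essentially the same route as the paper: both observe that $j^{-1}\circ f|_{\Delta}$ is the restriction of a M\"obius transformation $\sigma$ and extend $j$ to $\widehat{\mathbb{C}}$ as $f\circ\sigma^{-1}$. The only minor difference is that you verify $\|\mu_j\|_{\infty}=\|\mu_f\|_{\infty}$ directly via the chain rule for precomposition with the conformal map $\sigma^{-1}$, whereas the paper argues through the inverse maps, using $j^{-1}=\sigma\circ f^{-1}$ off $\overline{f(\Delta)}$ together with $|\mu_j(z)|=|\mu_{j^{-1}}(j(z))|$ and $|\mu_f(z)|=|\mu_{f^{-1}}(f(z))|$; the two computations are equivalent.
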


\begin{proof}
We first notice that $g^{-1} \circ f|_{\Delta}=\sigma|_{\Delta}$, here $\sigma$  is a M\"obius transformation which maps the unit disk into itself. Since $\sigma \circ f^{-1}$ is a quasiconformal mapping from $\widehat{\mathbb{C}}$ to itself,  we see that  $\left[\sigma \circ f^{-1}\right]^{-1}=f \circ \sigma^{-1}$
is a quasiconformal  extension of  $g$ to $\widehat{\mathbb{C}}$.

Noting that $g^{-1}|_{ \widehat{\mathbb{C}}\setminus \overline{f{(\Delta)}}}=\sigma \circ f^{-1}|_{ \widehat{\mathbb{C}}\setminus \overline{f{(\Delta)}}}$, we see that $\mu_{g^{-1}}(z)=\mu_{f^{-1}}(z),\, z\in \mathbb{C}\setminus \overline{f{(\Delta)}}.$ On the other hand, we  have
$$|\mu_{f}(z)|=|\mu_{f^{-1}}(f(z))|\, \, {\text{and}}\,\, |\mu_{g}(z)|=|\mu_{g^{-1}}(g(z))|,\,\, z \in \mathbb{C}\setminus \overline{\Delta}.$$
It follows that $\|\mu_{g}\|_{\infty}=\|\mu_{f}\|_{\infty}.$ The lemma is proved.
\end{proof}

Now, we start to prove Theorem \ref{m-1}. By combining Theorem \ref{m}, Proposition \ref{p-1} and Lemma \ref{le}, we see that,  if \begin{equation}\label{inequ-1-0}\frac{36(\alpha+1)k^2}{(\alpha-1)} \leq \frac{36(\alpha+1)(\alpha+3)}{\alpha(\alpha+2)}\end{equation}
holds for any $\alpha>2$, then $\beta_f(-2)\leq 1$.

On the other hand, we see that the inequality (\ref{inequ-1-0}) is equivalent to
$$k^2\leq \frac{(\alpha-1)(\alpha+3)}{\alpha^2+2\alpha}=1-\frac{3}{\alpha^2+2\alpha}.$$
It is easy to see that
$$\inf\limits_{\alpha>2} \left[1-\frac{3}{\alpha^2+2\alpha}\right]=\frac{5}{8}.$$
Thus,  if $k^2\leq \frac{5}{8}$, i.e., $0\leq k\leq\sqrt{\frac{5}{8}}\approx 0.79056$, then the inequality (\ref{inequ-1-0}) holds for any $\alpha>2$.   This finishes the proof  of Theorem \ref{m-1}.
%We end this section with the following
%\begin{remark}
%It should be pointed that, for a univalent function $f$ in $\Delta$, if 
%$$S_f(z)$$
%\end{remark}

\section{\bf {Boundary dilatation of the quasiconformal extension of univalent functions and Brennan conjecture}}

\subsection{Statement of the results} In this section, by refining Proposition \ref{p-1}, we shall show that the Brennan conjecture is satisfied for another class of quasidisks.

To state the results of this section,  we introduce the notion of {\em boundary dilatation} of a quasiconformal mapping.  Let $f$ be a univalent function in $\Delta$ admitting a quasiconformal extension to $\widehat{\mathbb{C}}$.  The {\em boundary dilatation} of $f$, denoted by $h(f)$, is defined as
\begin{equation}\label{boun}
h(f):=\inf\{\|\mu_f|_{\Delta^{*}\setminus E}\|_{\infty}: \, E {\text { is a compact set in}}\,\, \Delta^{*}\}.
\end{equation}
Here we see $ \Delta^{*}$ as an open set in the Riemann sphere $\widehat{\mathbb{C}}$ under the spherical distance and $h(f)$ is the infimum of $\|\mu_f|_{\Delta^{*}\setminus E}\|_{\infty}$ over all compact subsets $E$ contained in $\Delta^{*}$. 

% $b(f)=0$ if $f$ is an asymptotically conformal mapping.
 We now state the main result of this section.
\begin{theorem}\label{m-1-1}
Let $f$ be a univalent function in $\Delta$ admitting a quasiconformal extension to $\widehat{\mathbb{C}}$.  Let $h(f)$ be the boundary dilatation of $f$. If $h(f)\leq \sqrt{\frac{3}{11}}\approx 0.52223$, then $\beta_f(-2)\leq 1$ and the Brennan conjecture is satisfied for the domain $f(\Delta)$.
\end{theorem}

For the proof of  Theorem \ref{m-1-1}, we need the following refinement of Proposition \ref{p-1}.
\begin{proposition}\label{m-1-1-1}
Let $f$ be a univalent function in $\Delta$, $r\in (0,1)$ and let $f_r(z)=f(rz)$. If, for any $\alpha>2$,  there is a constant $R=R(f, \alpha) \in (0,1)$ such that for any $r\in (0,1)$,
\begin{eqnarray*}\iint_{A_R}|S_{f_r}(z)\phi(z)|^2(1-|z|^2)^{\alpha+2}dxdy
\leq \frac{36(\alpha+1)}{\alpha(\alpha+2)}\|\phi (z)\|_{\alpha}^2
 \end{eqnarray*}holds for all $\phi \in \mathcal{H}_{\alpha}(\Delta)$, then $\beta_{f}(-2)\leq 1$.
 Here, $A_R:=\Delta\setminus \Delta(0,R)$ is an annulus.
\end{proposition}

\subsection{Proof of Proposition \ref{m-1-1-1}}
We will use the following lemma from \cite{SS}.
\begin{lemma}\label{le-1}
A function $\phi  \in \mathcal{H}_{\alpha}(\Delta)$ if and only if $\phi' \in  \mathcal{H}_{\alpha+2}(\Delta)$. Moreover, for any $\varepsilon$ such that $0<\varepsilon<\alpha(\alpha+1)$,
\begin{equation}\label{ieq-1}
\|\phi'\|_{\alpha+2}^2 \leq [\alpha(\alpha+1)+\varepsilon] \|\phi\|_{\alpha}^2+C_1(\phi, \varepsilon);
\end{equation}
\begin{equation}\label{ieq-2}
\|\phi\|_{\alpha}^2 \leq \frac{1}{[\alpha(\alpha+1)-\varepsilon] }\|\phi '\|_{\alpha+2}^2+C_2(\phi, \varepsilon),
\end{equation}
where the constants $C_1(\phi, \varepsilon)$ and $C_2(\phi, \varepsilon)$ depend only on finitely many first
Taylor coefficients of the function $\phi$.
\end{lemma}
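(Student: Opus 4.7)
The plan is to work directly with Taylor expansions and the orthogonality of $\{z^n\}$ in each weighted space $\mathcal{H}_\beta(\Delta)$, turning the question into a comparison of two weighted $\ell^2$ sequences. This reduces the entire lemma to an analysis of a single explicit ratio of weights.

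First I would write $h(z)=\sum_{n=0}^{\infty} a_n z^n$ and evaluate both norms coefficient-wise. A standard Beta-function computation (polar coordinates followed by $u=r^2$) gives
\[
\|h\|_\alpha^2 \;=\; \pi \sum_{n=0}^{\infty} |a_n|^2\, \frac{n!\,\Gamma(\alpha)}{\Gamma(n+\alpha)},
\qquad
\|h'\|_{\alpha+2}^2 \;=\; \pi \sum_{n=1}^{\infty} n^2 |a_n|^2\, \frac{(n-1)!\,\Gamma(\alpha+2)}{\Gamma(n+\alpha+1)}.
\]
Dividing the $n$-th terms (and using $\Gamma(\alpha+2)=\alpha(\alpha+1)\Gamma(\alpha)$ and $\Gamma(n+\alpha+1)=(n+\alpha)\Gamma(n+\alpha)$) yields the explicit ratio
\[
\lambda_n \;:=\; \frac{n\,\alpha(\alpha+1)}{n+\alpha}, \qquad n\ge 1.
\]

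The whole lemma now hinges on the elementary fact that $\lambda_n$ is strictly increasing in $n$, $\lambda_1=\alpha$, and $\lambda_n\nearrow \alpha(\alpha+1)$ as $n\to\infty$. In particular $\lambda_n<\alpha(\alpha+1)$ for every $n\ge 1$, which immediately gives the forward inequality
\[
\|h'\|_{\alpha+2}^2 \;=\; \pi \sum_{n=1}^\infty \lambda_n\, |a_n|^2 \frac{n!\,\Gamma(\alpha)}{\Gamma(n+\alpha)}
\;\le\; \alpha(\alpha+1)\,\|h\|_\alpha^2,
\]
so (\ref{ieq-1}) holds even with $\varepsilon=0$ and $C_1(h,\varepsilon)=0$. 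This also shows $h\in\mathcal{H}_\alpha(\Delta)$ implies $h'\in\mathcal{H}_{\alpha+2}(\Delta)$.

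For the converse inequality (\ref{ieq-2}), given $0<\varepsilon<\alpha(\alpha+1)$, I would choose $N=N(\varepsilon,\alpha)$ so large that $\lambda_n\ge \alpha(\alpha+1)-\varepsilon$ for all $n>N$. Splitting the sum for $\|h\|_\alpha^2$ at $N$ and comparing tail-to-tail with $\|h'\|_{\alpha+2}^2$ gives
\[
\pi\sum_{n>N}|a_n|^2\frac{n!\,\Gamma(\alpha)}{\Gamma(n+\alpha)}
\;\le\; \frac{1}{\alpha(\alpha+1)-\varepsilon}\,\|h'\|_{\alpha+2}^2,
\]
so setting
\[
C_2(h,\varepsilon) \;=\; \pi\sum_{n=0}^{N}|a_n|^2\frac{n!\,\Gamma(\alpha)}{\Gamma(n+\alpha)}
\]
— a quantity depending only on $a_0,\dots,a_N$, i.e.\ on finitely many Taylor coefficients — yields (\ref{ieq-2}). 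The same argument shows that $h'\in\mathcal{H}_{\alpha+2}(\Delta)$ forces $h\in\mathcal{H}_\alpha(\Delta)$, completing the equivalence.

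The argument is essentially bookkeeping, so there is no serious obstacle; the only slightly delicate point is keeping the Gamma-function identities straight so that the ratio collapses cleanly to $n\alpha(\alpha+1)/(n+\alpha)$. Once that identity is in hand, monotonicity and the limit $\lambda_n\to\alpha(\alpha+1)$ do all the remaining work.
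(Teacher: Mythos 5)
Your proposal is correct, and it is worth noting that the paper itself gives no proof of this lemma at all: it simply quotes it as "established in \cite{SS}" (Shimorin). Your coefficient-wise argument is a legitimate, self-contained replacement. The norm computations are right, including the normalizing factors $(\alpha-1)$ and $(\alpha+1)$ hidden in the definitions of $\|\cdot\|_{\alpha}$ and $\|\cdot\|_{\alpha+2}$, and the ratio of weights does collapse to $\lambda_n=n\alpha(\alpha+1)/(n+\alpha)$, which increases from $\lambda_1=\alpha$ to the limit $\alpha(\alpha+1)$. Two features of your route deserve mention. First, it actually proves more than the stated \eqref{ieq-1}: since $\lambda_n\le\alpha(\alpha+1)$ term by term, you get $\|h'\|_{\alpha+2}^2\le\alpha(\alpha+1)\|h\|_{\alpha}^2$ with no $\varepsilon$ and no additive constant, which trivially implies \eqref{ieq-1}. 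Second, in the converse direction the cutoff $N$ is chosen from the condition $\lambda_n\ge\alpha(\alpha+1)-\varepsilon$, so $N=N(\alpha,\varepsilon)$ is independent of $h$ and $C_2(h,\varepsilon)$ indeed depends only on the first $N+1$ Taylor coefficients of $h$, exactly as the lemma requires; the same tail comparison (using $\lambda_n\ge\alpha>0$) gives the "if" direction of the equivalence $h\in\mathcal{H}_{\alpha}\Leftrightarrow h'\in\mathcal{H}_{\alpha+2}$. In short, the proposal is a complete elementary proof with sharp constants, whereas the paper outsources the statement to Shimorin's article.
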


We begin the proof of Proposition  \ref{m-1-1-1}. In view of  (\ref{ss}), we see that it is enough to show that $(f')^{-1}\in \mathcal{H}_{\alpha}(\Delta)$ for any fixed $\alpha>2$. First, we have
\begin{equation}\label{ind}
-\frac{d^3}{dz^3}\left[(f')^{-1}\right]=\frac{d}{dz}\left[S_f(z) (f')^{-1}\right]+S_f(z)\frac{d}{dz}\left[(f')^{-1}\right].
\end{equation}
Also, for $r\in (0, 1)$, $\phi \in \mathcal{A}(\Delta)$, it follows from the assumption that there is a constant $R>0$ such that
\begin{eqnarray} \iint_{A_R}|S_{f_r}(z)\phi (rz)|^2(1-|z|^2)^{\alpha+2}dxdy \leq \frac{36(\alpha+1)}{\alpha(\alpha+2)}\|\phi (rz)\|_{\alpha}^2.\nonumber
 \end{eqnarray}
Hence
\begin{eqnarray}\label{sch} \|S_{f_r}(z)\phi (rz)\|_{\alpha+4}^2&=&(\alpha+3)\iint_{\Delta}|S_{f_r}(z)\phi (rz)|^2(1-|z|^2)^{\alpha+2}dxdy\nonumber \\
& \leq&\frac{36(\alpha+1)(\alpha+3)}{\alpha(\alpha+2)} \|\phi (rz)\|_{\alpha}^2+C_3(\phi, \alpha, R).
 \end{eqnarray}
We let $$\mathbf{A}(\alpha):=\frac{36(\alpha+1)(\alpha+3)}{\alpha(\alpha+2)},$$
and we use $(f'_r)^{-1}$ to denote $[f'_r(z)]^{-1}$ for simplicity.
Note that $(f'_r)^{-1}=[rf'(rz)]^{-1}$.  Then, by using (\ref{ieq-2}) three times for the function $[f'(rz)]^{-1}$, we see from (\ref{ind}) that
\begin{eqnarray}\label{est-0}
\lefteqn{[\alpha(\alpha+1)(\alpha+2)(\alpha+3)(\alpha+4)(\alpha+5)-\varepsilon] \|(f'_r)^{-1}\|_{\alpha}^2} \nonumber \\
&&\leq \|\frac{d^3}{dz^3}\left[(f'_r)^{-1}\right]\|_{\alpha+6}^2+\frac{1}{r^2} C_4(f, \varepsilon)\nonumber \\
&&\leq  \left( \left\|\frac{d}{dz}\left[S_{f_r}(z)(f'_r)^{-1}\right]\right\|_{\alpha+6}+\left\|S_{f_r}(z)\frac{d}{dz}\left[(f'_r)^{-1}\right]\right\|_{\alpha+6}\right)^2+\frac{1}{r^2}C_4(f, \varepsilon).
\end{eqnarray}
Here $\varepsilon>0$ is a small number.

On the other hand, since $S_{f_r}(z)(f'_r)^{-1}=rS_f(rz)[f'(rz)]^{-1}$, we see from (\ref{ieq-1}) and (\ref{sch}) that
\begin{eqnarray}\label{est-1}
\lefteqn{\left\|\frac{d}{dz}\left[S_{f_r}(z)(f'_r)^{-1}\right]\right\|_{\alpha+6}}\nonumber \\
&&\leq \sqrt{(\alpha+4)(\alpha+5)+\varepsilon}\left\|S_{f_r}(z)(f'_r)^{-1}\right\|_{\alpha+4}+C_5(f, \varepsilon)\nonumber \\
&&\leq\sqrt{(\alpha+4)(\alpha+5)+\varepsilon}\cdot\sqrt{\mathbf{A}(\alpha)\|(f'_r)^{-1}\|_{\alpha}^2+\frac{1}{r^2}C_6(f, \alpha)}+C_5(f, \varepsilon)\nonumber\\
&&\leq  \sqrt{\mathbf{A}(\alpha)[(\alpha+4)(\alpha+5)+\varepsilon]}\|(f'_r)^{-1}\|_{\alpha}+\frac{1}{r}C_7(f, \alpha, \varepsilon).
\end{eqnarray}
 In the second inequality of (\ref{est-1}) we have used $[f'(rz)]^{-1}$ instead of $\phi(rz)$. Since $\left[(f'_r)^{-1}\right]'={f''(rz)}[f'(rz)]^{-2}, $ then, from (\ref{sch}) and  (\ref{ieq-1}) again,   we obtain
\begin{eqnarray}\label{est-2}
\lefteqn{\left\|S_{f_r}(z)\frac{d}{dz}\left[(f'_r)^{-1}\right]\right\|_{\alpha+6}\leq \sqrt{\mathbf{A}(\alpha+2)\left\| \frac{d}{dz}\left[(f'_r)^{-1}\right]\right\|_{\alpha+2}^2+C_8(f, \alpha)}} \nonumber \\
&&\leq \sqrt{\mathbf{A}(\alpha+2)\left\{[\alpha(\alpha+1)+\varepsilon]\|(f'_r)^{-1}\|_{\alpha}^2+\frac{1}{r^2}C_9(f, \varepsilon)\right\}+C_8(f, \alpha)} \nonumber \\
&& \leq  \sqrt{\mathbf{A}(\alpha+2)[\alpha(\alpha+1)+\varepsilon]} \|(f'_r)^{-1}\|_{\alpha}+\frac{1}{r}C_{10}(f, \alpha, \varepsilon).
\end{eqnarray}
In (\ref{est-1}) and (\ref{est-2}) we have used that $$\sqrt{A+B}\leq \sqrt{A}+\sqrt{B}, \,A>0,\, B>0.$$
Thus, combining (\ref{est-0}),  (\ref{est-1}),   (\ref{est-2}),  we obtain 
\begin{eqnarray*}
\lefteqn{[\alpha(\alpha+1)(\alpha+2)(\alpha+3)(\alpha+4)(\alpha+5)-\varepsilon] \|(f'_r)^{-1}\|_{\alpha}^2} \nonumber \\
&&\leq \left[\sqrt{\mathbf{A}(\alpha)[(\alpha+4)(\alpha+5)+\varepsilon]} +\sqrt{\mathbf{A}(\alpha+2)[\alpha(\alpha+1)+\varepsilon]}\right]^2\|(f'_r)^{-1}\|_{\alpha}^2\nonumber \\
&&\quad +\frac{1}{r}C_{11}(f, \alpha, \varepsilon)\|(f'_r)^{-1}\|_{\alpha}+\frac{1}{r^2}C_{12}(f, \alpha, \varepsilon).
\end{eqnarray*}
Let
$$\mathbf{B}(\alpha, \varepsilon):=\alpha(\alpha+1)(\alpha+2)(\alpha+3)(\alpha+4)(\alpha+5)-\varepsilon;$$
$$\mathbf{C}(\alpha, \varepsilon):= \left[\sqrt{\mathbf{A}(\alpha)[(\alpha+4)(\alpha+5)+\varepsilon]} +\sqrt{\mathbf{A}(\alpha+2)[\alpha(\alpha+1)+\varepsilon]}\right]^2.$$
It is not difficult to see that for $\alpha>2$ and $\varepsilon$ small enough, 
$$\mathbf{D}(\alpha, \varepsilon)=\mathbf{B}(\alpha, \varepsilon)-\mathbf{C}(\alpha, \varepsilon)>0.$$
Hence
\begin{eqnarray}\label{eee-1}
\mathbf{D}(\alpha, \varepsilon)\|(f'_r)^{-1}\|_{\alpha}^2\leq \frac{1}{r}C_{11}(f, \alpha, \varepsilon)\|(f'_r)^{-1}\|_{\alpha}+\frac{1}{r^2}C_{12}(f, \alpha, \varepsilon).
\end{eqnarray}
We conclude that there exist $\mathcal{R}\in (0,1), \mathcal{M}>0$,  such that $\|(f'_r)^{-1}\|_{\alpha}^2 \leq \mathcal{M}$ when $r>\mathcal{R}$.
Otherwise, there is an increasing sequence $\{r_n\}$ with $r_n<1$ and $r_n\rightarrow 1$ as $n \rightarrow \infty$, such that $\|(f'_{r_n})^{-1}\|_{\alpha}^2 \rightarrow \infty$. This contradicts the inequality (\ref{eee-1}).
On the other hand, by Fatou's lemma, we have
$\|(f')^{-1}\|_{\alpha}^2\leq  \varliminf_{r\rightarrow 1^{-}} \|(f'_r)^{-1}\|_{\alpha}^2.$
Consequently,  we see that $(f')^{-1}\in \mathcal{H}_{\alpha}(\Delta)$ for any fixed $\alpha>2$,  which finishes the proof.

\subsection{Proof of Theorem \ref{m-1-1}}
To prove Theorem \ref{m-1-1}, we will use the following key lemma.
\begin{lemma}\label{th-le}
Let $f$ be a univalent function in $\Delta$, which has a quasiconformal extension to $\widehat{\mathbb{C}}$. For $r\in (0,1)$, let $f_r(z)=f(rz)$ and $h(f)$ be the boundary dilatation of $f$. Then, for any $\varepsilon\in (0, 1-h(f))$ there is a constant $R \in (0,1)$ such that for any $r\in (0,1)$,
\begin{eqnarray*}\iint_{A_R}|S_{f_r}(z)\phi (z)|^2(1-|z|^2)^{\alpha+2}dxdy
\leq \frac{36(h(f)+\varepsilon)^2}{(\alpha-1)[1-(h(f)+\varepsilon)^2]}\|\phi (z)\|_{\alpha}^2
 \end{eqnarray*}
 holds for any $\alpha>1$ and $\phi \in \mathcal{H}_{\alpha}(\Delta)$.
 Here, $A_R=\Delta\setminus \Delta(0,R)$ is an annulus.
\end{lemma}

\begin{proof}[Proof of Lemma \ref{th-le}] We need an integral expression of the Schwarzian derivative of a univalent function which can be extended to a quasiconformal mapping in $\widehat{\mathbb{C}}$. This integral expression has appeared in \cite{AZ}.
For the completeness,  we will give a detailed derivation of this integral expression and clarify some arguments presented  in \cite{AZ}.

Let $\bar{f}=\tau \circ f$, here $\tau(z)=\frac{1}{f' (0)}[z-f(0)]$.
Then we have $\bar{f}(0)=0, {\bar{f}}'(0)=1$. We assume that $\bar{f}(z)$ have the
series expansion at origin as
$$\bar{f}(z)=z+a_2z^2+a_3z^3+\cdots.$$
The mapping 
$\widehat{f}=\varsigma \circ \bar{f} \circ \varsigma$,
$\varsigma(z)=\frac{1}{z}$ is 
univalent(conformal) in $ \Delta^{*}\setminus \{\infty\}$, and has the
series expansion at infinity
$$\widehat{f}(z)=z+b_0+\frac{b_1}{z}+\cdots.$$
It is  easy to see that $b_0=-a_2, \, b_1=a_2^2-a_3$. For any $z\in \Delta^{*}\setminus \{\infty\}$, let
$$\phi_z(w)=\frac{w+z }{1+{\bar{z}}w}.$$
The Koebe transformation $\mathcal{K}_{\widehat{f}}(w)$ of $\widehat{f}$ (see  \cite[page 21]{Po}) is defined as
$$\mathcal{K}_{\widehat{f}}(w)=\frac{\widehat{f}(\phi_z(w))-\widehat{f}(z)}{(1-|z|^2){\widehat{f}}'(z)}.$$
It follows that
$F(w)=\varsigma \circ \mathcal{K}_{\widehat{f}} \circ \varsigma$ is univalent in $\Delta^{*}\setminus \{\infty\}$ and has a series expansion
at infinity
$$F(w)=w+c_0+\frac{c_1}{w}+\cdots.$$
Then, by Pompieu's formula,  for any $z\in \Delta^{*}\setminus \{\infty\}$, we have
$$F(z)=\frac{1}{2\pi i}\oint_{\Gamma} \frac{f(w)}{w-z}dw-\frac{1}{\pi}\iint_{\Delta(z, |z|+2)}\frac{\bar{\partial} F(w)}{w-z}dudv.$$
Here,  $\Gamma=\partial{\Delta}(z, |z|+2)$ is a circle. Since $F$ is univalent in $\Delta^{*}\setminus \{\infty\}$, it follows that $\bar{\partial}F(w)=0$ when $w \in \Delta^{*}\setminus \{\infty\}$. Then, by Laurent's theorem,  we have
$$F(z)=z+c_0-\frac{1}{\pi}\iint_{\Delta}\frac{\bar{\partial} F(w)}{w-z}dudv,\,\,z\in \Delta^{*}\setminus \{\infty\}.$$
Consequently,
\begin{eqnarray}\label{az-0}
\iint_{\Delta}\bar{\partial}F(w)dudv&=&\lim_{z\rightarrow
\infty}{z}^2\iint_{\Delta}\frac{\bar{\partial}F(w)}{(z-w)^2}
\,dudv  \\
&=& -\pi \lim_{z \rightarrow \infty }{z}^2
(F'(z)-1) \nonumber \\
&=& \pi c_1=-\frac{\pi}{6}\lim_{z\rightarrow \infty}{z}^4
S_{F}(z). \nonumber \end{eqnarray}
Note that $F= \varsigma\circ \chi \circ \widehat{f}\circ \phi_z\circ \varsigma$, where $$ \chi (w)=\frac{w-\widehat{f}(z)}{(1-|z|^2)\widehat{f}'(z)}.$$ Let
$$\rho_{z}(w):=\phi_{z}\circ \varsigma(w)=\frac{1+wz}{w+\bar{z}}.$$ From the transformation rule (\ref{chain}), it follows  that
$$S_{F}(w)=S_{\widehat{f}}\circ
\rho_z(w)[\rho_z'(w)]^2.$$
Consequently, we see from $$\rho_z'(w)=\frac{|z|^2-1}{(w+\bar{z})^2},$$ and $\rho_z(w) \rightarrow z$ as $w \rightarrow \infty$ that
$$S_{\widehat{f}}(z)=\lim_{w \rightarrow \infty}S_{F}(w)[\rho_z'(w)]^{-2}=(|z|^2-1)^{-2}\lim_{w\rightarrow \infty}{w}^4 S_{F}(w),$$
and then, in view of (\ref{az-0}), we obtain \beqnn
S_{\widehat{f}}(z)&=&-\frac{6}{\pi}(|z|^2-1)^{-2}\iint_{\Delta}\bar{\partial}F(w)\,dudv \\
&=& -\frac{6}{\pi}(|z|^2-1)^{-2}\iint_{\Delta}\mu_{F}(w)\partial F(w)\,dudv.\eeqnn
It follows that
 \beqnn
|S_{\widehat{f}}(z)|&\leq &\frac{6}{\pi}(|z|^2-1)^{-2}\iint_{\Delta}|\mu_{F}(w)||\partial F(w)|\,dudv
\\ &=&\frac{6}{\pi}(|z|^2-1)^{-2}\iint_{\Delta}|\mu_{F}(w)|\bigg[\frac{J_{F}(w)}{1-|\mu_{F}(w)|^2}\bigg]^{\frac{1}{2}}\,dudv,\eeqnn
where $J_F$ is the Jacobian of $F$. Hence, by Cauchy's inequality, we have \begin{equation}\label{inequ} |S_{\widehat{f}}(z)|^2 \leq \frac{36}{\pi^2} (|z|^2-1)^{-4}\iint_{\Delta}\frac{|\mu_F(w)|^2}{1-|\mu_{F}(w)|^2}\,dudv \iint_{\Delta}J_F(w)\,dudv.\end{equation}
By the well-known area theorem, we have $$
\iint_{\Delta}J_F(w)\,dudv\leq \pi. $$

On the other hand, since $\mu_F(w)=\mu_{\widehat{f} \circ \rho_z}(w)$, a change of variables in the first integral of inequality (\ref{inequ}) gives
\begin{equation*}|S_{\widehat{f}}(z)|^2(|z|^2-1)^2 \leq \frac{36}{\pi}
\iint_{\Delta}\frac{|\mu_{\widehat{f}}(\zeta)|^2}{1-|\mu_{\widehat{f}}(\zeta)|^2}\cdot \frac{d\xi d\eta}{|\zeta-z|^{4}}, \, z\in \Delta^{*}\setminus \{\infty\}.\end{equation*}
%See  \cite{C} or \cite{FHT}  for a proof of (\ref{az}).
From (\ref{chain}), we have $$|S_{\widehat{f}}(z)|=|S_{f}(\frac{1}{z})|\frac{1}{|z|^4},
 z\in \Delta^{*}\setminus\{\infty\}. $$
 Thus, for any $z\in \Delta\setminus \{0\}$, we have
 \begin{equation}\label{add}
 |S_f(z)|^2(1-|z|^2)^2 \leq  \frac{36}{\pi}\iint_{\Delta}\frac{|\mu_{\widehat{f}}(\zeta)|^2}{1-|\mu_{\widehat{f}}(\zeta)|^2}\cdot \frac{d\xi d\eta}{|1-\zeta z|^{4}}.
 \end{equation}
 It is easy to see that (\ref{add}) still holds for $z=0$. Hence (\ref{add}) holds for all $z\in \Delta$.
For any $r\in (0, 1)$, we see from $S_{f_r}(z)=r^2S_f(rz)$ that
\begin{eqnarray}\label{add-1}
\lefteqn{|S_{f_r}(z)|^2(1-|z|^2)^4} \\
&&\leq \frac{36r^4}{\pi} (1-|z|^2)^2\iint_{\Delta}\frac{|\mu_{\widehat{f}}(\zeta)|^2}{1-|\mu_{\widehat{f}}(\zeta)|^2}\cdot \frac{d\xi d\eta}{|1-r\zeta z|^{4}}, \, z\in \Delta.\nonumber
\end{eqnarray}
Since $|\mu_{f}(z^{-1})|=|\mu_{\widehat{f}}(z)|$, $z\in \Delta\setminus \{0\}$, we see that for any $\varepsilon \in (0, 1-h(f))$ there is a constant $r_0 \in (0,1)$ such that
 \begin{equation*}
 |\mu_{\widehat{f}}(\zeta)|\leq  h(f)+\frac{\varepsilon}{2}
 \end{equation*}
for all $r_0\leq |\zeta| <1$.
%Here $h(f)+\varepsilon<1$.
Let
 $$\mathbf{E}(r, z):=\iint_{\Delta}\frac{|\mu_{\widehat{f}}(\zeta)|^2}{1-|\mu_{\widehat{f}}(\zeta)|^2}\cdot \frac{d\xi d\eta}{|1-r\zeta z|^{4}}.$$
From
$$|1-r\zeta z|^4 \geq (1-r_0)^4, \,\,{\text  {for any}}\,\,|\zeta|\leq r_0, z\in \Delta,$$
we see that
\begin{eqnarray*}\lefteqn{\mathbf{E}(r, z)\leq\frac{[h(f)+\frac{1}{2}\varepsilon]^2}{1-[h(f)+\frac{1}{2}\varepsilon]^2}\iint_{A_{r_0}} \frac{d\xi d\eta}{|1-r\zeta z|^{4}}}\\ &&\quad\quad\quad  +\frac{\|\mu_{\widehat{f}}\|_{\infty}^2}{1-\|\mu_{\widehat{f}}\|_{\infty}^2}\cdot\iint_{\Delta(0, r_0)} \frac{d\xi d\eta}{|1-r\zeta z|^{4}}\\
 &&\quad\leq \frac{[h(f)+\frac{1}{2}\varepsilon]^2}{1-[h(f)+\frac{1}{2}\varepsilon]^2}\iint_{\Delta} \frac{d\xi d\eta}{|1-r\zeta z|^{4}}+\frac{\|\mu_{\widehat{f}}\|_{\infty}^2}{1-\|\mu_{\widehat{f}}\|_{\infty}^2}\cdot  \frac{\pi r_0^2}{(1-r_0)^{4}}\\
 && \quad =\frac{[h(f)+\frac{1}{2}\varepsilon]^2}{1-[h(f)+\frac{1}{2}\varepsilon]^2} \cdot  \frac{\pi}{(1-|rz|^{2})^2}+\frac{\|\mu_{\widehat{f}}\|_{\infty}^2}{1-\|\mu_{\widehat{f}}\|_{\infty}^2}\cdot  \frac{\pi r_0^2}{(1-r_0)^{4}},
 \end{eqnarray*}
 where $A_{r_0}=\Delta \setminus \Delta(0, r_0)$.
It follows that
 \begin{eqnarray}\label{new-1}
\lefteqn{\frac{36r^4}{\pi} (1-|z|^2)^2\iint_{\Delta}\frac{|\mu_{\widehat{f}}(\zeta)|^2}{1-|\mu_{\widehat{f}}(\zeta)|^2}\cdot \frac{d\xi d\eta}{|1-r\zeta z|^{4}}} \nonumber \\
 && \leq \frac{36r^4[h(f)+\frac{1}{2}\varepsilon]^2}{1-[h(f)+\frac{1}{2}\varepsilon]^2} \cdot  \frac{(1-|z|^2)^2}{(1-|rz|^{2})^2}+\frac{\|\mu_{\widehat{f}}\|_{\infty}^2}{1-\|\mu_{\widehat{f}}\|_{\infty}^2}\cdot  \frac{36r^{4}r_0^2(1-|z|^2)^2}{(1-r_0)^{4}} \nonumber\\ && \leq
 \frac{36[h(f)+\frac{1}{2}\varepsilon]^2}{1-[h(f)+\frac{1}{2}\varepsilon]^2}+\frac{\|\mu_{\widehat{f}}\|_{\infty}^2}{1-\|\mu_{\widehat{f}}\|_{\infty}^2}\cdot  \frac{36(1-|z|^2)^2}{(1-r_0)^{4}}.
\end{eqnarray}
We see from (\ref{new-1}) that there is a constant $R\in (0,1)$ such that
 \begin{eqnarray}\label{add-2}
\frac{36r^4}{\pi} (1-|z|^2)^2\iint_{\Delta}\frac{|\mu_{\widehat{f}}(\zeta)|^2}{1-|\mu_{\widehat{f}}(\zeta)|^2}\cdot \frac{d\xi d\eta}{|1-r\zeta z|^{4}}
 \, \leq
 \frac{36(h(f)+\varepsilon)^2}{1-(h(f)+\varepsilon)^2}
\end{eqnarray}
holds for $R\leq |z|<1$. Here we have used that the function $x^2(1-x^2)^{-1}$ is increasing in $[0,1)$ and $h(f)+\varepsilon<1$.
Consequently, from (\ref{add-1}) and (\ref{add-2}), we find that
\begin{eqnarray*}\iint_{A_R}|S_{f_r}(z)\phi(z)|^2(1-|z|^2)^{\alpha+2}dxdy
\leq \frac{36(h(f)+\varepsilon)^2}{(\alpha-1)[1-(h(f)+\varepsilon)^2]}\|\phi(z)\|_{\alpha}^2
 \end{eqnarray*}
holds for any $r \in (0,1)$ and $\phi \in \mathcal{H}_{\alpha}(\Delta)$. The lemma is proved.
 \end{proof}

The following variant of Lemma \ref{le} can be established in analogous form, and will be stated without proof.
\begin{lemma}\label{le-2}
Let $f$ be a univalent function in $\Delta$ admitting a quasiconformal extension to $\widehat{\mathbb{C}}$.   If $g$ is another univalent function from $\Delta$ to $f(\Delta)$, then $g$ admits a quasiconformal extension to $\widehat{\mathbb{C}}$ with $h(g)=h(f)$.
\end{lemma}
%\begin{remark}
%By using the same arguments of the proof of Lemma \ref{le}, we can prove Lemma \ref{le-2}.
%\end{remark}

We can now proceed with the proof of  Theorem \ref{m-1-1}.  We see from Proposition \ref{m-1-1-1}, Lemma \ref{th-le} and Lemma \ref{le-2} that,  if the inequality \begin{equation}\label{inequ-1}\frac{36h^2(f)}{(\alpha-1)(1-h^2(f))} \leq \frac{36(\alpha+1)}{\alpha(\alpha+2)}\end{equation}
holds for any $\alpha>2$, then $\beta_f(-2)\leq 1$. 

Meanwhile, it follows from (\ref{inequ-1}) that
$$\frac{h^2(f)}{1-h^2(f)} \leq \frac{\alpha^2-1}{\alpha^2+2\alpha}=1-\frac{2\alpha+1}{\alpha^2+2\alpha},\,\,\, {\text {and}} \,\,\,\,\inf\limits_{\alpha>2} \left[1-\frac{2\alpha+1}{\alpha^2+2\alpha}\right]=\frac{3}{8}.$$
Consequently,  if $$\frac{h^2(f)}{1-h^2(f)} \leq \frac{3}{8}, {\text { i.e.,}}\, \,0\leq h(f) \leq\sqrt{\frac{3}{11}}\approx 0.52223,$$ then the inequality (\ref{inequ-1}) holds for any $\alpha>2$ and $\beta_f(-2)\leq 1$. This finishes the proof. 

\section{ \bf {A new characterization of  asymptotically conformal curves and of the Weil-Petersson curves}}
Let $f$ be a univalent function from $\Delta$ to a bounded Jordan domain in $\mathbb{C}$.  We say that $f(S^1)$ is an {\em asymptotically conformal curve} if $f$ can be extended to a quasiconformal mapping in $\widehat{\mathbb{C}}$ and whose complex dilatation $\mu_f$ satisfies that $\mu_f(z) \rightarrow 0, \,\,|z|\rightarrow 1^{+}.$  Here, we say $f$ is an {\em asymptotically conformal mapping}.  See \cite{GS, Po-1, Sh-1}.
\begin{remark}
We see that $h(f)=0$ if $f$ is an asymptotically conformal mapping. Then, from Theorem \ref{m-1-1}, we obtain that the Brennan conjecture is satisfied for the quasidisk $f(\Delta)$ when $f$ is an asymptotically conformal mapping.
\end{remark}

We say that $f(S^1)$ is a {\em Weil-Petersson curve}, if $f$ can be extended to a quasiconformal mapping in $\widehat{\mathbb{C}}$ and whose complex dilatation $\mu_f$ satisfies that $$\iint_{{\Delta}^{*}}|\mu_f(z)|^2/(|z|-1)^2dxdy<\infty.$$  The Weil-Petersson curves have been studied extensively, see \cite{C, B-1, B-2, HS, Sh-1, Sh-2, W}.
It is known that  $f(S^1)$ is an asymptotically conformal curve if and only if
$S_f(z)(1-|z|^2)^2\rightarrow 0,\,\, |z| \rightarrow 1^{-}.$
Moreover,  $f(S^1)$ is a Weil-Petersson curve if and only if $$\iint_{\Delta}|S_{f}(z)|^2(1-|z|^2)^2dxdy<\infty.$$

We shall prove that
\begin{theorem}\label{asy}
Let $\alpha>1$.  Let $f$ be a univalent function from $\Delta$ to a bounded Jordan domain in $\mathbb{C}$.

(I) $f(S^1)$ is an asymptotically conformal curve if and only if the multiplier operator $M_f$,  acting from $\mathcal{H}_{\alpha}(\Delta)$ to $\mathcal{H}_{\alpha+4}(\Delta)$,  is compact.  Moreover,

(II) $f(S^1)$ is a Weil-Petersson curve if and only if the multiplier operator $M_f$ belongs to the Hilbert-Schmidt class.
\end{theorem}
{\em Proof of the sufficiency of (I) of Theorem \ref{asy}. }Suppose that $f(S^1)$ is an asymptotically conformal curve.  To show that $M_f$ is compact operator, it is sufficient to show that $M_f(\psi_n) \rightarrow 0$ for each sequence $(\psi_n)$ which converges to zero weakly. It is easy to check that $(\psi_n)$ converges to zero weakly if and only if $(\psi_n)$ is bounded and $(\psi_n)$ converges to zero locally.

On the other hand, we recall that $f(S^1)$ is an asymptotically conformal curve if and only if $S_f(z)(1-|z|^2)^2\rightarrow 0,\,\, |z| \rightarrow 1^{-}.$ Thus, for any $\varepsilon>0$, there exists  some $r\in (0,1)$ such that  $|S_f(z)(1-|z|^2)^2|<\varepsilon$, when $|z|>r$. It follows that, for $\psi \in \mathcal{H}_{\alpha}(\Delta)$,  we have 
\begin{eqnarray}\|M_f(\psi)\|_{\alpha+4}^2&=&(\alpha+3)\iint_{\Delta}|S_{f}(z)\psi(z)|^2 (1-|z|^2)^{\alpha+2}dxdy  \nonumber \\
&=&(\alpha+3)\iint_{\Delta}|S_{f}(z)|^2(1-|z|^2)^4|\psi(z)|^2 (1-|z|^2)^{\alpha-2}dxdy   \nonumber \\
&\leq & 36(\alpha+3)\iint_{|z|<r}|\psi(z)|^2 (1-|z|^2)^{\alpha-2}dxdy+ (\alpha+3){\varepsilon}^2\|\psi\|_{\alpha}^2.  \nonumber
\end{eqnarray}
Consequently,  we see that $M_f(\psi_n) \rightarrow 0$ for each sequence $(\psi_n)$ which converges to zero weakly. The sufficiency of (I) is proved. 

{\em Proof of the necessity of (I) of Theorem \ref{asy}.} If $M_f$ is a compact, we consider the function
$$\psi_a(z)=\frac{(1-|a|^2)^{\frac{\alpha}{2}}}{(1-az)^{\alpha}},\, a\in \Delta.$$
From \cite[Lemma 4.2.2]{Zh}, we see that $\psi_a(z) \in \mathcal{H}_\alpha(\Delta)$ and that $\psi_a(z)$ tends to zero locally uniformly in $\Delta$ when $|a|\rightarrow 1^{-}$. We conclude that $\psi_a$ converges to zero weakly, hence $M_f(\psi_a) \rightarrow 0$  as $|a| \rightarrow 1^{-}$, i.e., 
\begin{equation}\label{as-1}\lim\limits_{|a|\rightarrow 1^{-}}\iint_{\Delta} |S_f(z)|^2\frac{(1-|a|^2)^{\alpha}(1-|z|^2)^{\alpha+2}}{|1-az|^{2\alpha}}dxdy=0.\end{equation}
For $a\in \Delta$, let $l \in (0,1)$ be such that the disk $\Delta(a, l(1-|a|))=\{|z-a|\leq l(1-|a|)\}$ is contained in $\Delta$. Hence, for any $z\in \Delta(a, l(1-|a|))$, 
\begin{equation}\label{eq-1}(1-l)(1-|a|)\leq 1-|z| \leq (1+l)(1-|a|)\end{equation}
and
\begin{equation}\label{eq-1-1}  (1-|a|)\leq |1-az|\leq (2+l)(1-|a|) .\end{equation}
It follows from (\ref{eq-1}) and (\ref{eq-1-1}) that
\begin{equation}\label{as-2} \frac{(1-|a|^2)^{\alpha}(1-|z|^2)^{\alpha}}{|1-az|^{2\alpha}} \geq \frac{(1-l)^{\alpha}}{(2+l)^{2\alpha}}\end{equation}
holds for any $z\in \Delta(a, l(1-|a|))$.

On the other hand, since $|S_f(z)|^2$ is a subharmonic function 
$$|S_{f}(a)|^2(1-|a|^2)^2\leq \frac{4}{\pi l^2}\iint_{|z-a|<l(1-|a|)}|S_f(z)|^2dxdy.$$
It follows from (\ref{eq-1}) that
\begin{equation}\label{as-3}|S_{f}(a)|^2(1-|a|^2)^4\leq \frac{16}{\pi l^2(1-l)^2}\iint_{|z-a|<l(1-|a|)}|S_f(z)|^2(1-|z|^2)^2dxdy.\end{equation}
Combining (\ref{as-2}),  (\ref{as-3}),  we see that there is a constant $C(l, \alpha)>0$ such that
\begin{equation}|S_{f}(a)|^2(1-|a|^2)^4\leq C(l, \alpha) \iint_{\Delta} |S_f(z)|^2\frac{(1-|a|^2)^{\alpha}(1-|z|^2)^{\alpha+2}}{|1-az|^{2\alpha}}dxdy.\end{equation}
Thus, from (\ref{as-1}),  $S_f(a)(1-|a|^2)^2\rightarrow 0,\,\, |a| \rightarrow 1^{-}.$ Hence $f(S^1)$ is an asymptotically conformal curve. This finishes the proof of  (I). 

For part (II), let $n\in \mathbb{N}\cup\{0\}$ and let
$$e_{n}(z)=\sqrt{\frac{\Gamma(n+\alpha)}{n!\Gamma(\alpha)}}z^n,\, z\in \Delta. $$
Here, $\Gamma(s)$  stands for the usual Gamma function.  It is easy to see that $\{e_n\}$ is an orthonormal set in $\mathcal{H}_{\alpha}(\Delta)$.

It is  known that $M_f$ belongs to the Hilbert-Schmidt class if and only if
$$\sum_{n=0}^{\infty} \|M_f(e_n)\|_{\alpha+4}^2<\infty.$$
Since
$$\frac{1}{(1-|z|^2)^{\tau}}=\sum_{n=0}^{\infty} \frac{\Gamma(\tau+n)}{n! \Gamma(\tau)}|z|^{2n},\,\, \tau>0,\,z \in \Delta,$$
we have
\begin{eqnarray}\lefteqn{\sum_{n=0}^{\infty} \|M_f(e_n)\|_{\alpha+4}^2}\nonumber \\
&&=(\alpha+3)\sum_{n=0}^{\infty}\iint_{\Delta}|S_f(z)|^2 \frac{\Gamma(n+\alpha)}{n!\Gamma(\alpha)}|z|^{2n}(1-|z|^{2})^{\alpha+2}dxdy\nonumber \\
&&=(\alpha+3)\iint_{\Delta}|S_f(z)|^2 \sum_{n=0}^{\infty}\frac{\Gamma(n+\alpha)}{n!\Gamma(\alpha)}|z|^{2n}(1-|z|^{2})^{\alpha+2}dxdy\nonumber \\
&&=(\alpha+3)\iint_{\Delta}|S_f(z)|^2 (1-|z|^{2})^{2}dxdy,\nonumber
\end{eqnarray}
which shows that 
$$\iint_{\Delta}|S_{f}(z)|^2(1-|z|^2)^2dxdy<\infty.$$
 Thus, $f(S^1)$ is a Weil-Petersson curve if and only if the multiplier operator $M_f$ belongs to the Hilbert-Schmidt class. This finishes the proof of Theorem \ref{asy}.

\section{{\bf Final remarks}}
By the experimental work, Kraetzer conjectured in \cite{K} that $B(t)=\frac{t^2}{4}$ when $t\in [-2,2]$. We shall show that $\beta_{f}(-1)\leq \frac{1}{4}$ for certain class of  univalent functions $f$ which admit a quasiconformal extension to $\widehat{\mathbb{C}}$.

We denote by $\mathbf{M}_{\alpha}={\text {Mult}}(\mathcal{H}_{\alpha}(\Delta), \mathcal{H}_{\alpha+4}(\Delta)$ the Banach space of all bounded multipliers form $\mathcal{H}_{\alpha}(\Delta)$ to 
$\mathcal{H}_{\alpha+4}(\Delta)$ supplied with the multiplier norm.  We see that the mapping $a\rightarrow m(az), m\in \mathbf{M}_{\alpha}$ is analytic from $\Delta$ to $\mathbf{M}_{\alpha}$ and continuous from $\overline{\Delta}=\Delta \cup S^{1}$ to $\mathbf{M}_{\alpha}$. On the other hand, the rotation operators $m(z) \rightarrow m(e^{i\theta}z)$ are isometries in $\mathbf{M}_{\alpha}$. Then, by the maximum modulus principle, we know that the dilation operators $m(z)\rightarrow m(rz)$, $r\in (0, 1)$ are contractions in $\mathbf{M}_{\alpha}$(see also \cite{SS}).

Let $f$ be a univalent function in $\Delta$ admitting a quasiconformal extension to $\widehat{\mathbb{C}}$ with $\|\mu_f\|_{\infty}=k$. For $r\in (0,1)$, let $f_{r}(z)={f(rz)}$.
Since
\begin{equation*}
\frac{d^2}{dz^2}\left[(f')^{-1/2}\right]=-\frac{1}{2}S_f(z)(f')^{-1/2},
\end{equation*}
it follows from Lemma \ref{le-1} that for small $\varepsilon>0$, 
 \begin{eqnarray}\label{add-11}
\|(f'_r)^{-1/2}\|_{\alpha}^2& \leq&  \frac{1}{\alpha(\alpha+1)(\alpha+2)(\alpha+3)-\varepsilon}\left\|\frac{d^2}{dz^2}\left[(f'_r)^{-1/2}\right]\right\|_{\alpha+4}^2+\frac{1}{r} C_{13}(f, \varepsilon)\nonumber \\
&= &\frac{1}{4[\alpha(\alpha+1)(\alpha+2)(\alpha+3)-\varepsilon]}\left\|S_{f_r}(z)(f'_r)^{-1/2}\right\|_{\alpha+4}^2+\frac{1}{r} C_{13}(f, \varepsilon).
\end{eqnarray}
Since $S_{f_r}(z)=r^2S_f(rz)$ and since the dilation operators $m(z)\rightarrow m(rz)$, $r\in (0, 1)$ are contractions in $\mathbf{M}_{\alpha}$, we obtain from Theorem \ref{m} that
 \begin{equation}\label{add-12}
\|(f'_r)^{-1/2}\|_{\alpha}^2 \leq \frac{9(\alpha+1)k^2}{(\alpha-1)[\alpha(\alpha+1)(\alpha+2)(\alpha+3)-\varepsilon]}\|(f'_r)^{-1/2}\|_{\alpha}^2+\frac{1}{r} C_{13}(f, \varepsilon)\nonumber. 
\end{equation}
It is not difficult to see that, for any fixed $\alpha>\frac{5}{4}$, when $k\leq \frac{\sqrt{1105}}{48}$ and $\varepsilon$ small enough, we have $$ \frac{9(\alpha+1)k^2}{(\alpha-1)[\alpha(\alpha+1)(\alpha+2)(\alpha+3)-\varepsilon]}< 1.$$
It follows that, for any fixed $\alpha>\frac{5}{4}$,  there are two constants $\mathcal{M}_1>0$ and $\mathcal{R}_1\in (0,1)$ such that $\|(f'_r)^{-1/2}\|_{\alpha}^2\leq \mathcal{M}_1$ when $r>\mathcal{R}_1$. Then, by Fatou's lemma, we have $(f')^{-1/2}\in \mathcal{H}_{\alpha}(\Delta)$ for fixed  $\alpha>\frac{5}{4}$. Hence $\beta_f(-1)\leq \frac{1}{4}$. We have proved that
 \begin{proposition}\label{r-1}
Let $f$ be a univalent function in $\Delta$ admitting a  quasiconformal extension to $\widehat{\mathbb{C}}$.  If $\|\mu_f\|_{\infty} \leq \frac{\sqrt{1105}}{48}\approx 0.69253$, then $\beta_f(-1)\leq \frac{1}{4}$.
\end{proposition}
We also have 
\begin{proposition}\label{r-2}
Let $f$ be a univalent function in $\Delta$ admitting a  quasiconformal extension to $\widehat{\mathbb{C}}$.  Let $h(f)$ be the boundary dilatation of $f$. If $h(f)\leq \sqrt{ \frac{65}{321}}\approx 0.44999$, then $\beta_f(-1)\leq \frac{1}{4}$.
\end{proposition}

\begin{proof}[Proof of Proposition \ref{r-2}]
By Lemma \ref{th-le}, we have, for any $\varepsilon_1 \in (0, 1-h(f))$,  there is a constant $R \in (0,1)$ such that for any $r\in (0,1)$,
\begin{eqnarray*}\iint_{A_R}|S_{f_r}(z)\phi (z)|^2(1-|z|^2)^{\alpha+2}dxdy
\leq \frac{36(h(f)+\varepsilon_1)^2}{(\alpha-1)[1-(h(f)+\varepsilon_1)^2]}\|\phi (z)\|_{\alpha}^2
 \end{eqnarray*}
 holds for any $\alpha>1$ and $\phi \in \mathcal{H}_{\alpha}(\Delta)$.
Thus, for any $\phi \in \mathcal{A}(\Delta)$, we have
\begin{eqnarray}\label{add-111} \|S_{f_r}(z)\phi (rz)\|_{\alpha+4}^2&=&(\alpha+3)\iint_{\Delta}|S_{f_r}(z)\phi (rz)|^2(1-|z|^2)^{\alpha+2}dxdy\nonumber \\
& \leq&\frac{36(\alpha+3)(h(f)+\varepsilon_1)^2}{(\alpha-1)[1-(h(f)+\varepsilon_1)^2]} \|\phi (rz)\|_{\alpha}^2+C_{14}(\phi,  \alpha, h(f), \varepsilon_1).
 \end{eqnarray}
By letting $[f'(rz)]^{-1/2}$ to be instead of $\phi(rz)$ in (\ref{add-111}), we have
 \begin{eqnarray*} \|S_{f_r}(z)(f'_r)^{-1/2}\|_{\alpha+4}^2  \leq\frac{36(\alpha+3)(h(f)+\varepsilon_1)^2}{(\alpha-1)[1-(h(f)+\varepsilon_1)^2]} \|(f'_r)^{-1/2}\|_{\alpha}^2+\frac{1}{r}C_{15}(f,  \alpha, \varepsilon_1).
 \end{eqnarray*}
It follows from  (\ref{add-11}) that, for small number $\varepsilon_2>0$,  
$$
\|(f'_r)^{-1/2}\|_{\alpha}^2 \leq \mathbf{F}(h(f), \varepsilon_1) \mathbf{G}(\alpha, \varepsilon_2)  \|(f'_r)^{-1/2}\|_{\alpha}^2+\frac{1}{r}C_{16}(f,  \alpha, \varepsilon_1, \varepsilon_2),
$$
where 
$$ \mathbf{F}(h(f), \varepsilon_1):=\frac{[h(f)+\varepsilon_1]^2}{1-[(h(f)+\varepsilon_1]^2},$$
$$ \mathbf{G}(\alpha, \varepsilon_2):=\frac{9(\alpha+3)}{(\alpha-1)[\alpha(\alpha+1)(\alpha+2)(\alpha+3)-\varepsilon_2]}.$$

On one hand, since $\mathbf{G}(\alpha, \varepsilon_2)$ is decreasing with respect to $\alpha$ when $\alpha>1$, hence for fixed $\alpha>\frac{5}{4}$ there is a constant $\theta>0$ such that  when $\varepsilon_2$ sufficiently small we have
\begin{equation}\label{ggg} \mathbf{G}(\alpha, \varepsilon_2) <\mathbf{G}(\frac{5}{4}, 0)-\theta=\frac{256}{65}-\theta.\end{equation}

On the other hand, since $x^2(1-x^2)^{-1}$ is increasing in $[0,1)$ and $h(f)+\varepsilon_1<1$, then when
$$h(f)\leq \sqrt{ \frac{65}{321}}\approx 0.44999, \, {\text {i.e.}},\,\frac{[h(f)]^2}{1-[(h(f)]^2} \leq \frac{65}{256},$$
and $\varepsilon_1$ sufficiently small we have  \begin{equation}\label{fff} \mathbf{F}(h(f), \varepsilon_1)<\frac{65}{256}+(\frac{65}{256})^2\theta.\end{equation}
Consequently,  for $\alpha>\frac{5}{4}$ and $\varepsilon_1, \varepsilon_2$ sufficiently small, we see from (\ref{ggg}) and (\ref{fff}) that 
 $$ \mathbf{F}(h(f), \varepsilon_1) \mathbf{G}(\alpha, \varepsilon_2) <1.$$
 It follows that, for fixed $\alpha>\frac{5}{4}$,  $\|(f'_r)^{-1/2}\|_{\alpha}^2\leq \mathcal{M}_2$ for some $\mathcal{M}_2>0$ when $r$ is close enough to $1$. Then, by Fatou's lemma, we see that $(f')^{-1/2}\in \mathcal{H}_{\alpha}(\Delta)$ for any fixed  $\alpha>\frac{5}{4}$. This implies that $\beta_f(-1)\leq \frac{1}{4}$, The proof of Theorem \ref{r-2} is finished.
\end{proof}

Theorem \ref{m-1} and \ref{m-1-1} can be restated  in the language of Teichm\"uller theory. We recall the definition of the universal  Teichm\"uller space and the universal asymptotic Teichm\"uller space. For primary references,  see \cite{GN, EGN-1, EMD, EGN-2}.

Let $M({\Delta}^{*})$ denote the open unit ball of the Banach space $L^{\infty}({\Delta}^{*})$ of essentially bounded measurable functions in ${\Delta}^{*}$. For $\mu \in M({\Delta}^{*})$, let $f_{\mu}$ be the quasiconformal mapping in the extended complex plane $\widehat{\mathbb{C}}$ with complex dilatation equal to $\mu$ in $\Delta^{*}$, equal to $0$ in $\Delta$, normalized $f_{\mu}(0)=0, \, f'_{\mu}(0)=1, \, f_{\mu}(\infty)=\infty$.  We say two elements $\mu$ and $\nu$ in $M(\Delta^{*})$ are equivalent, denoted by $\mu\sim \nu$, if $f_{\mu}|_{\Delta}=f_{\nu}|_{\Delta}$. The equivalence class of $\mu$  is denoted by $[\mu]_{T}$. Then $T=M(\Delta^{*})/\sim$ is one model of the {\em universal Teichm\"uller space}.

The Teichm\"uller distance $d([\mu]_{T}, [\nu]_{T})$ of two points $[\mu]$, $[\nu]$ in $T$ is defined as
\begin{eqnarray}
\lefteqn{d([\mu]_{T}, [\nu]_{T})=\frac{1}{2}\inf  \bigg\{ \log \frac{1+\|(\mu_1-\nu_1)/(1-\overline{\nu_1}\mu_1)\|_{\infty}}{1-\|(\mu_1-\nu_1)/(1-\overline{\nu_1}\mu_1)\|_{\infty}},}
\nonumber \\
&&\quad\quad\qquad\qquad\qquad\qquad\qquad\qquad [\mu_1]_{T}=[\mu]_{T}, [\nu_1]_{T}=[\nu]_{T}  \bigg\}. \nonumber
\end{eqnarray}
%$$d([\mu]_{T}, [\nu]_{T})=\frac{1}{2}\inf  \bigg\{ \log \frac{1+\|(\mu_1-\nu_1)/(1-\overline{\nu_1}\mu_1)\|_{\infty}}{1-\|(\mu_1-\nu_1)/(1-\overline{\nu_1}\mu_1)\|_{\infty}},\, [\mu_1]_{T}=[\mu]_{T}, [\nu_1]_{T}=[\nu]_{T}  \bigg\}. $$
In particular, the distance between $[\mu]_T$ and the basepoint $[0]_T$ is
$$d([\mu]_T, [0]_T)=\frac{1}{2}\log \frac{1+k_0([\mu]_T)}{1-k_0([\mu]_T)}, \,k_0([\mu]_T)=\inf\{\|\nu\|_{\infty},\, \nu\sim \mu\}.$$

We say $\mu$ and $\nu$ in $M({\Delta}^{*})$ are asymptotically equivalent if there exists some $\tilde{\nu}$ such that $\tilde{\nu}$ and $\nu$ are equivalent and $\tilde{\nu}(z)-\mu(z) \rightarrow 0$ as $|z|\rightarrow 1^{+}$. The asymptotic equivalence
of $\mu$ will be denoted by $[\mu]_{AT}$. The {\em universal asymptotic Teichm\"uller space} $AT$ is the set of
all the asymptotic equivalence classes $[\mu]_{AT}$ of elements $\mu$ in $M({\Delta}^{*})$.

The Teichm\"uller distance $d([\mu]_{AT}, [\nu]_{AT})$ of two points $[\mu]_{AT}$, $[\nu]_{AT}$ in $AT$ is defined as
\begin{eqnarray}
\lefteqn{d([\mu]_{AT}, [\nu]_{AT})=\frac{1}{2}\inf  \bigg\{ \log \frac{1+\|(\mu_1-\nu_1)/(1-\overline{\nu_1}\mu_1)\|_{\infty}}{1-\|(\mu_1-\nu_1)/(1-\overline{\nu_1}\mu_1)\|_{\infty}},}
\nonumber \\
&&\quad\quad\qquad\qquad\qquad\qquad\qquad\qquad [\mu_1]_{AT}=[\mu]_{AT}, [\nu_1]_{AT}=[\nu]_{AT}  \bigg\}. \nonumber
\end{eqnarray}
%$$d([\mu]_{AT}, [\nu]_{AT})=\frac{1}{2}\inf  \bigg\{ \log \frac{1+\|(\mu_1-\nu_1)/(1-\overline{\nu_1}\mu_1)\|_{\infty}}{1-\|(\mu_1-\nu_1)/(1-\overline{\nu_1}\mu_1)\|_{\infty}},\, [\mu_1]_{AT}=[\mu]_{AT}, [\nu_1]_{AT}=[\nu]_{AT}  \bigg\}. $$
In particular, the distance between $[\mu]_{AT}$ and the basepoint $[0]_{AT}$ is
$$d([\mu]_{AT}, [0]_{AT})=\frac{1}{2}\log \frac{1+h_0([\mu]_{AT})}{1-h_0([\mu]_{AT})},\,h_0([\mu]_{AT})=\inf\{h([\nu]_T),\,[\nu]_{AT}=[\mu]_{AT}\}.$$
Here,
%$$h([v]_T):=\inf\{\|\mu|_{\Delta^{*}\setminus E}\|_{\infty}: \, E {\text { is a compact set in}}\,\, \Delta^{*},\,\, {\mu\sim \nu}\}.$$
$$h([v]_T):=\inf\{h^{*}(\mu): \, {\mu\sim \nu}\},$$
and $$ h^{*}(\mu)=\inf\{\|\mu|_{\Delta^{*}\setminus E}\|_{\infty}: \, E {\text { is a compact set in}}\,\,\Delta^{*}\}.$$
\begin{remark}
 It is known that
$h_0([\mu]_{AT})=h([\mu]_T)$ and it is easy to see that $h([\mu]_{T})=\inf \{h(f_{\nu}),\,\, {\nu\sim \mu}\}$, here $h(f_{\nu})$  is defined as in (\ref{boun}). \end{remark}

%Also, it is  known  that $$B(-2)=\sup_{[\mu]\in T} \beta_{f_{\mu}}(-2).$$
Then, we can restate Theorem \ref{m-1} and Theorem \ref{m-1-1} as
\begin{theorem}
Let $\mu\in M({\Delta}^{*})$. Let $f_{\mu}$ be a quasiconformal mapping in the extended complex plane $\widehat{\mathbb{C}}$ with complex dilatation equal to $\mu$ in $\Delta^{*}$, equal to $0$ in $\Delta$, normalized $f_{\mu}(0)=0, \, f'_{\mu}(0)=1, \, f_{\mu}(\infty)=\infty$. If
$$d([\mu]_T, [0]_T)\leq \frac{1}{2}\log \frac{1+\sqrt{5/8}}{1-\sqrt{5/8}},\,\,{\text {or}}\,\, d([\mu]_{AT}, [0]_{AT})\leq \frac{1}{2}\log \frac{1+\sqrt{3/11}}{1-\sqrt{3/11}},$$
then $\beta_{f_{\mu}}(-2)\leq 1$ and the Brennan conjecture is satisfied for the domain $f_{\mu}(\Delta).$
\end{theorem}
%\end{remark}

\section{{\bf {Acknowledgements}}}
The author would like to thank the referee for careful reading of the paper and  invaluable comments and suggestions.  The author also thanks Prof. Shen Yuliang for pointing out several errors in the earlier version of  this paper.

\end{document}